\documentclass{amsart}

\usepackage[top = 1in, left = 1.25in, right = 1.5in, bottom = 1in]{geometry}

\usepackage{amsrefs}
\usepackage{mathrsfs}
\usepackage{verbatim}

\def\be{\begin{equation}}
\def\en{\end{equation}}

\parskip=10pt

\newcommand{\htop}{h_{\text{\normalfont top}}}

\newtheorem{theorem}{Theorem}[section] 

\newtheorem{proposition}[theorem]{Proposition}

\theoremstyle{definition}

\newtheorem{example}[theorem]{Example}
\newtheorem*{ack*}{Acknowledgments}

\theoremstyle{remark}

\numberwithin{equation}{section}

\title{Tree shift topological entropy}

\author{Karl Petersen}
\address{Department of Mathematics,
	CB 3250 Phillips Hall,
	University of North Carolina,
	Chapel Hill, NC 27599 USA}
\email{petersen@math.unc.edu}

\author{Ibrahim Salama}
\address{School of Business, North Carolina Central University, 
	Durham, NC 27707 USA}
\email{isalama@nccu.edu}

\date{\today}

\begin{document}
	
		\subjclass[2010]{37B10, 37B40, 54H20}
	\keywords{Tree shift, complexity function, entropy, Sturmian sequence}
	
	\begin{abstract}
	We give a definition of topological entropy for tree shifts, prove that the limit in the definition exists, and show that it dominates the topological entropy of the associated one-dimensional shift of finite type when the labeling of the tree shares the same restrictions. 
	\end{abstract}
	
\maketitle
\section{Introduction}
Tree shifts were introduced by Aubrun and B\'{e}al \cites{AB1,AB2,AB3,AB4,AB5} as interesting objects of study, since they are more complicated than one-dimensional subshifts while preserving some directionality, but perhaps not so hard to analyze as multidimensional subshifts. 
They have been studied further by Ban and Chang \cites{BC1,BC2,BC3,BC4,BC5}. 
We consider here the complexity of tree shifts and labeled trees in general, especially two variations of topological entropy.
We define the topological entropy of a tree shift in a different manner than Ban and Chang \cite{BC3}, prove that the limit in the definition exists, and consider ways to estimate it. 
In particular, if the tree shift consists of all trees labeled by a finite alphabet subject to adjacency restrictions given by a $0,1$ matrix that also defines a one-dimensional shift of finite type (SFT), we prove that the entropy of the tree shift is bounded below by that of the SFT.

 Although much of what we say extends to general trees, we focus on labelings of the standard infinite dyadic tree, which 
 corresponds to the set of all finite words on a two-element alphabet, for us 
 $\Sigma= \{ L,R\}$. 
As usual, $\Sigma^0$ is the empty word $\epsilon$, for $n \geq 1$ we denote by $\Sigma^n$ the set of all words of length $n$ on the alphabet $\Sigma$, and $\Sigma^*=\cup_{n \geq 0}\Sigma^n$.
A word $w \in \Sigma^*$ corresponds uniquely to a path in the tree from the root and to the vertex which is at the end of that path. We denote by $|w|$ the length of the word $w$.  
There are one-to-one correspondences between elements of $\Sigma^*$, nodes of the tree, and finite paths starting at the root. 

Let $A=\{ 0,1\}$ be a labeling alphabet. Then a {\em labeled tree} is a function $\tau : \Sigma^* \to A$. For each $w \in \Sigma^*$, $\tau (w)$ is thought of as the label attached to the node at the end of the path determined by $w$. 
The two shifts on $\Sigma^*$  are defined by $\sigma_i (w) = iw, i=L,R$. 
For $w=w_0 w_1 \dots w_{n-1} \in \Sigma^*$, define $\sigma_w= \sigma_{n-1} \dots \sigma_1 \sigma_0$ (note the reverse order, since $\sigma_w$ is a left action on words which are ordered left to right).
On a labeled tree $\tau$, define $(\sigma_i \tau )(w) = \tau(iw), i=L,R$. 

For each $n \geq 0$ let $\Delta_n = \cup_{0 \leq i \leq n} \Sigma^i$ denote the initial $n$-subtree of the dyadic tree. 
Then $\Delta_n$ has $n+1$ ``levels" and $2^{n+1}-1$ nodes. 
For any $x \in \Sigma^*$, the {\em shift to $x$} of $\Delta_n$ is the $n$-subtree $x \Delta_n = \sigma_x \Delta_n$.
An {\em $n$-block} is a function $B: \Delta_n \to A$, i.e., a labeling of the nodes of $\Delta_n$, or a {\em configuration} on $\Delta_n$.
We may write a $1$-block $B$ for which $B(\epsilon)=a, B(L)=b,B(R)=c$ as $a_b^c$. 
We say that an $n$-block $B$ {\em appears} in a labeled tree $\tau$ if there is a node $x \in \Sigma^*$ such that $\tau(xw)=B(w)$ for all $w \in \Delta_n$. 
Denote by $p_\tau$ the {\em complexity function} of the labeled tree $\tau$: for each $n \geq 0$, $p_\tau (n)$ is the number of distinct $n$-blocks that appear in $\tau$. 
A {\em tree shift} $X$ is the set of all labeled trees which omit all of a certain set (possibly infinite) of forbidden blocks. These are exactly the closed shift-invariant subsets of the full tree shift space $T(A)=A^{\Sigma^*}$. 
We deal here only with {\em transitive} tree shifts $X$, those for which there exists $\tau \in X$ such that every block that appears in $X$ appears in $\tau$.

The {\em complexity function} $p_\omega (n)$ of a sequence $\omega$ on a finite alphabet gives for each $n$ the number of distinct blocks (or words) of length $n$ found in the sequence. 
Hedlund and Morse \cite{HM} showed that for a one-sided sequence the following statements are equivalent: (1) there is an $n$ such that $p_\omega(n) \leq n$; 
(2) there is a $k$ such that $p_\omega(k+1)=p_\omega(k)$;
(3) $\omega$ is eventually periodic;
(4) $p_\omega$ is bounded. 
Moreover, Coven and Hedlund \cite{CovenHedlund1973} identified the two-sided sequences that have minimal unbounded complexity as exactly the Sturmian sequences together with certain concatenations of one-sided periodic sequences. 
In 1997 M. Nivat conjectured that an analogue of these statements might hold in higher dimensions, specifically for labelings of the integer lattice in $\mathbb Z^2$ by a finite alphabet; 
see for example \cites{Epifanio, Cyr-Kra, KPSalta} for the precise statement, some positive progress, and references. 
Extensions of these statements to labeled trees were provided in \cites{Carpi2001, Berstel2010}. See also \cites{GastGaujal2010,KimLim2015}.

The {\em complexity function} $p(n)$ of a tree shift gives for each $n \geq 0$ the number of $n$-blocks among all labeled trees in the tree shift. 
Recall that we deal only with transitive tree shifts; thus if $\tau \in X$ has the property that every block that appears in $X$ also appears in $\tau$, then $p(n)=p_\tau (n)$ for all $n$.
The {\em entropy} of the tree shift is defined to be 
\be
h=\limsup_{n \to \infty} \frac{\log p(n)}{2^{n+1}-1},
\en
the exponential growth rate of the number of different labelings of shifts of $\Delta_n$ in $\tau$ divided by the number of sites for the labels. 
(We could equivalently divide by just $2^{n+1}$ instead of $2^{n+1}-1$.)
Ban and Chang use a different definition,
\be
h_2=\limsup_{n \to \infty} \frac{\log\log p(n)}{n}.
\en
They show that for labelings of dyadic trees consistent with 1-step finite type restrictions on adjacent symbols, $h_2$ is always either $0$ or $\log 2$ \cite{BC3}. 
We will see below that the quantity $h$ behaves quite differently.

\section{Existence of the limit}

Since factor maps between tree shifts are given by sliding block codes, the complexity function is nonincreasing under factor maps, and hence both versions of entropy are invariants of topological conjugacy of tree shifts. 
Ban and Chang show that for many tree shifts $h_2$ exists as a limit. For our $h$ we have the following.

\begin{theorem}\label{prop:limit}
	The limit $	h=\lim_{n \to \infty} {\log p(n)}/{2^{n+1}}$
	exists. In fact, for each labeled tree $\tau$ the limit $h(\tau)=\lim_{n \to \infty} {\log p_\tau(n)}/{2^{n+1}}$ exists.
\end{theorem}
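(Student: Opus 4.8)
The plan is to exploit the self-similar structure of the initial subtrees and reduce the existence of the limit to a monotone-sequence argument of Fekete type. The starting point is the observation that $\Delta_{n+1}$ splits as the disjoint union of its root $\{\epsilon\}$ together with the two subtrees $L\Delta_n$ and $R\Delta_n$ hanging from the level-one nodes $L$ and $R$, each of which is a shifted copy of $\Delta_n$. Consequently an $(n+1)$-block $B$ is completely determined by the triple consisting of its root label $B(\epsilon)$, the $n$-block $w \mapsto B(Lw)$ on the left, and the $n$-block $w \mapsto B(Rw)$ on the right; moreover each of these two $n$-blocks appears in $\tau$ whenever $B$ does. Hence the assignment sending $B$ to this triple is injective from the set of $(n+1)$-blocks of $\tau$ into $A \times \{n\text{-blocks of }\tau\}^2$, which yields the key inequality
\[
p_\tau(n+1) \le |A|\, p_\tau(n)^2 ,
\]
and the identical argument, applied to the blocks occurring anywhere in $X$, gives $p(n+1) \le |A|\, p(n)^2$.

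Next I would linearize this submultiplicative inequality. Taking logarithms gives $\log p_\tau(n+1) \le 2\log p_\tau(n) + \log|A|$, and the additive constant is absorbed by the substitution $q_n = \log p_\tau(n) + \log|A|$, for which the inequality becomes the clean homogeneous form $q_{n+1} \le 2\,q_n$. Dividing by $2^{n+1}$ shows that the sequence $q_n/2^n$ is non-increasing; it is also bounded below by $0$, since $p_\tau(n) \ge 1$ forces $q_n \ge 0$. A non-increasing sequence bounded below converges, so $q_n/2^n$ has a finite limit $L$.

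Finally I would translate this back to the quantity in the statement. Because
\[
\frac{\log p_\tau(n)}{2^{n+1}} = \frac{1}{2}\cdot\frac{q_n}{2^n} - \frac{\log|A|}{2^{n+1}},
\]
and the last term tends to $0$, the limit $h(\tau)=\lim_n \log p_\tau(n)/2^{n+1} = L/2$ exists; it lies in $[0,\log|A|]$, as one sees from the trivial bound $p_\tau(n)\le |A|^{2^{n+1}-1}$. Running the same chain of equalities with $p(n)$ in place of $p_\tau(n)$ gives the existence of $h$; alternatively, transitivity of $X$ supplies a $\tau$ with $p(n)=p_\tau(n)$ for all $n$, and the statement for $h$ follows immediately.

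I expect the only genuinely substantive step to be the first one: correctly identifying the recursive decomposition of $\Delta_{n+1}$ and verifying that it produces the bound $p_\tau(n+1)\le |A|\,p_\tau(n)^2$ with the right constant. Once this inequality is in hand, the passage to a convergent monotone sequence is routine, the only care being the bookkeeping of the normalizing factor ($2^{n+1}$ versus $2^n$), which is exactly what the substitution $q_n = \log p_\tau(n) + \log|A|$ is designed to manage.
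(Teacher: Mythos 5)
Your proof is correct, and it takes a genuinely different and in fact more elementary route than the paper's. The paper establishes two gluing inequalities, $p_\tau(m+n)\le p_\tau(m)\,p_\tau(n)^{2^m}$ and $p_\tau(km)\le p_\tau(m)^{(2^{km}-1)/(2^m-1)}$, and then runs a Fekete-style argument: set $\alpha=\liminf_n \log p_\tau(n)/2^{n+1}$, choose a good scale $r$, write $n=i+kr$, and chase $\epsilon$'s to force the $\limsup$ below $\alpha+2\epsilon$. You instead use only the one-step decomposition $\Delta_{n+1}=\{\epsilon\}\cup L\Delta_n\cup R\Delta_n$, sharpened to $p_\tau(n+1)\le |A|\,p_\tau(n)^2$ (root label plus the two $n$-sub-blocks, each of which indeed appears in $\tau$ at the shifted nodes $xL$, $xR$ whenever the big block appears at $x$), absorb the additive constant via $q_n=\log p_\tau(n)+\log|A|$ so that $q_{n+1}\le 2q_n$, and observe that $q_n/2^n$ is non-increasing and bounded below, hence convergent; this replaces the entire $\epsilon$-bookkeeping by a monotone-sequence argument. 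Your route also yields two small dividends: it identifies the limit as an infimum, $h(\tau)=\inf_n\,\bigl(\log p_\tau(n)+\log|A|\bigr)/2^{n+1}$, which gives the explicit lower bound $p_\tau(n)\ge |A|^{-1}e^{2^{n+1}h(\tau)}$ for every $n$ (a quantitative statement in the spirit of the paper's Question 4 about limits versus infima); and since sub-blocks of blocks of $X$ again appear in $X$, the same recursion applies verbatim to $p(n)$, so the existence of $h$ does not even require transitivity. What the paper's heavier machinery buys is the general two-parameter inequality $p_\tau(m+n)\le p_\tau(m)p_\tau(n)^{2^m}$, a reusable tool expressing the tree analogue of subadditivity at all scales; but for the bare existence of the limit, your argument suffices and is cleaner.
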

\begin{proof}
	Let $\tau$ be a labeled tree as above. We follow the strategy of the standard proof using subadditivity, adapting as necessary to the tree situation.
	We have, for $m,n \geq 0$,
	\be
	p_\tau(m+n) \leq p_\tau(m) p_\tau(n)^{2^m}.
	\en
	This is because $\Delta_m$ has $2^m$ terminal nodes, and $\Delta_{m+n}$ is obtained
	by attaching a shift of $\Delta_n$ to each one of these terminal nodes. 
	Thus any configuration on a shift of $\Delta_{m+n}$ in $\tau$ consists of a configuration on a shift of $\Delta_m$ with $2^m$ configurations on shifts of $\Delta_n$ attached to the terminal nodes.
	
	 For any (positive) integer $k$,
	\be
	p_\tau(km) \leq p_\tau(m)^{(2^{km}-1)/(2^m-1)}.
	\en
	At the root of any shift of $\Delta_{km}$, we have one $\Delta_m$, with
	$2^m$ terminal nodes. Attached to its terminal nodes we have $2^m$ shifts of $\Delta_m$, 
	bringing the total number of shifts of $\Delta_m$ so far to $1+2^m$. 
	Next, we have
	$2^{2m}$ terminal nodes, with a shift of $\Delta_m$ attached to each one, bringing
	the total so far to $1+2^m+2^{2m}$. Continuing this way, at the last level we
	have $2^{(k-1)m}$ terminal nodes, with a shift of $\Delta_m$ attached to each one,
	bringing the total number of shifts of $\Delta_m$ to $1+2^m+2^{2m}+\cdots+2^{(k-1)m}$. 
	The labeling in $\tau$ of the shift of $\Delta_{km}$ gives labelings in $\tau$ of each of these shifts of $\Delta_m$ (but these latter labelings might not all be independent).
	
	We show now that $\lim {\log p_\tau(n)}/{2^{n+1}}$ exists. 
    Let 
    \be
    \alpha=\liminf \frac{\log p_\tau(n)}{2^{n+1}}=\liminf\frac{\log p_\tau(n)}{2^{n+1}-2}. 
    \en
	 Let 
	$\epsilon >0$ and choose an integer $r \geq 1$ such that 
	 	${\log p_\tau(r)}/{(2^{r+1}-2)}<\alpha+{\epsilon}$.
	Given an integer $n$,  write $n=i+kr$, with $0\leq i <r$. 
	Then
	\be
	\begin{aligned}
	p_\tau(n)& = p_\tau(i+kr) \leq p_\tau(i) p_\tau(kr)^{2^i} \\\
	& \leq p_\tau(i)  [p_\tau(r)^{(2^{kr}-1)/(2^r-1)}]^{2^i} \\
	& = p_\tau(i) p_\tau(r)^{2^i(2^{kr}-1)/(2^r-1)}.
	\end{aligned}
	\en
Now if $n$ is large enough,
\be
\begin{aligned}
	\alpha - \epsilon &\leq \frac{\log p_\tau(n)}{2^{n+1}}\\
	 & \leq  \frac{ \log p_\tau(i)}{2^{n+1}}
	+\frac{1}{2^{i+kr+1}}\frac{2^i(2^{kr}-1)}{2^r-1} \log p_\tau(r)  \\
	& = \frac{\log p_\tau(i)}{2^{n+1}}+
	\frac{1-2^{-kr}}{2^{r+1}-2} \log p_\tau(r) \\
	& <  \frac{\log p_\tau(i)}{2^{n+1}}+\frac{\log p_\tau(r)}{2^{r+1}-2} \\
	& <   \frac{\log p_\tau(i)}{2^{n+1}}  + \alpha + \epsilon\\
	& < \alpha+2\epsilon.  
	\end{aligned}
\en
	\end{proof}

\section{Entropy estimates for tree shifts determined by one-dimensional SFT's}
Consider now a dyadic tree shift with vertices labeled from a finite alphabet $A=\{a_1,\dots,a_d\}$ with $1$-step finite type restrictions given by a $0,1$ matrix $M$ indexed by the elements of $A$:  adjacent nodes  in the tree are allowed to have labels $i$ for the first (closer to the root) and $j$ for the second if and only if $M_{ij}=1$.
We wish to find, or at least estimate, the entropy $h$ of the tree shift.

\begin{proposition}
	Suppose that the matrix $M$ is irreducible, and for each $i=1,\dots,d$ and $n \geq 0$ denote by $x_i(n)$ the number of labelings of $\Delta_n$ which are consistent with the adjacency matrix $M$ and have the symbol $a_i$ at the root.
	Then for each $i$ and $j$,
	\be\label{eq:limsups}
	\limsup_{n\to \infty} \frac{\log x_i(n)}{2^{n+1}} =\limsup_{n \to \infty} \frac{\log x_j(n)}{2^{n+1}} = h.
	\en
	If $M$ is primitive (irreducible and aperiodic, so that the corresponding one-dimensional shift of finite type is topologically mixing), then for each $i$ and $j, \lim_{n\to \infty} {\log x_i(n)}/{2^{n+1}}$ exists, and
		\be
		\lim_{n\to \infty} \frac{\log x_i(n)}{2^{n+1}} =\lim_{n \to \infty} \frac{\log x_j(n)}{2^{n+1}} = h.
		\en
\end{proposition}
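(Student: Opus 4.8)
The plan is to reduce the whole statement to a single recursion for the counts \(x_i(n)\) and then to track the normalized quantities \(a_i(n) = \log x_i(n)/2^{n+1}\). First I would record the basic recursion: a node labeled \(a_i\) has a left and a right child, each the root of an independent shift of \(\Delta_n\) whose label \(j\) must satisfy \(M_{ij}=1\), so
\[
x_i(n+1) = \Bigl(\sum_{j:\,M_{ij}=1} x_j(n)\Bigr)^{2}.
\]
The inner sum has at most \(d\) terms, hence lies between \(\max_{j:M_{ij}=1} x_j(n)\) and \(d\) times that maximum; taking logarithms and dividing by \(2^{n+2}\) converts this into
\[
F_i(n) \le a_i(n+1) \le F_i(n) + \delta_n, \qquad F_i(n) = \max_{j:\,M_{ij}=1} a_j(n), \quad \delta_n = \frac{\log d}{2^{n+1}}.
\]
Thus, up to an error decaying geometrically, \(a_i(n+1)\) is the maximum of \(a_j(n)\) over the out-neighbors \(j\) of \(i\) in the graph of \(M\).

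Next I would iterate this relation. The lower bound is clean and yields \(a_i(n+k) \ge \max_{l:\,(M^k)_{il}>0} a_l(n)\), the maximum of \(a_l(n)\) over symbols \(l\) reachable from \(i\) in exactly \(k\) steps. In the upper bound the errors accumulate, but only as \(\sum_{t\ge 0}\delta_{n+t} = \log d\cdot 2^{-n} =: \eta_n\), which is bounded uniformly in \(k\) and tends to \(0\). Hence
\[
\max_{l:\,(M^k)_{il}>0} a_l(n) \;\le\; a_i(n+k) \;\le\; \max_{l:\,(M^k)_{il}>0} a_l(n) + \eta_n .
\]
Throughout I would use \(p(n) = \sum_i x_i(n)\), which gives \(\log p(n)/2^{n+1} = \max_i a_i(n) + O(2^{-n})\) and, by Theorem~\ref{prop:limit}, \(h = \lim_n \log p(n)/2^{n+1} = \lim_n \max_i a_i(n)\).

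For the irreducible case only the lower bound is needed. Given \(i,j\), irreducibility provides \(k\) with \((M^k)_{ij}>0\), whence \(a_i(n+k) \ge a_j(n)\); since shifting the index leaves a \(\limsup\) unchanged, this gives \(c_i \ge c_j\) for \(c_i := \limsup_n a_i(n)\). Reversing the roles of \(i\) and \(j\) yields \(c_i = c_j\) for all pairs, and then \(\limsup_n \log p(n)/2^{n+1} = \max_i c_i\) identifies the common value as \(h\).

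The primitive case is the crux, and here the uniform error bound is what matters. Primitivity gives \(k_0\) with \(M^k > 0\) for all \(k \ge k_0\), so for such \(k\) every symbol is reachable from every \(i\) and the two-sided bound collapses to
\[
\max_l a_l(n) \;\le\; a_i(n+k) \;\le\; \max_l a_l(n) + \eta_n \qquad (k \ge k_0),
\]
with both outer terms independent of \(i\). Reading this for a fixed large \(n\) and every \(N = n+k \ge n+k_0\) shows that all the values \(a_i(N)\) lie in a single interval of length \(\eta_n \to 0\); hence the sequences \(a_i(\cdot)\) are Cauchy and converge to one common limit, which must be \(h\). The only genuine obstacle is organizing the iteration so that the accumulated error stays controlled uniformly in \(k\); the geometric decay of \(\delta_n\) forced by the normalization \(2^{n+1}\) is exactly what secures this.
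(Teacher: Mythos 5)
Your proof is correct, and its backbone is the same as the paper's: the crucial fact in both arguments is that if $a_j$ is reachable from $a_i$ in $k$ steps of the graph of $M$, then $x_i(n+k) \geq x_j(n)^{2^k}$, which in your normalized variables is exactly $a_i(n+k)\geq a_j(n)$. The paper obtains this inequality by an explicit construction (label every path from the root to level $k$ with the \emph{same} word from $a_i$ to $a_j$, then attach independent $\Delta_n$-labelings rooted at $a_j$), whereas you obtain it by iterating the recursion $x_i(n+1)=\bigl(\sum_{j:M_{ij}=1}x_j(n)\bigr)^2 \geq \max_{j:M_{ij}=1}x_j(n)^2$; these are the same inequality in different clothing, and the irreducible case is then identical in substance (shift-invariance of $\limsup$, plus $\max$ of finitely many $\limsup$'s equals $\limsup$ of the $\max$). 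Where you genuinely diverge is the primitive case. The paper uses only the lower bound together with the trivial bound $x_i(n)\leq p(n)$: from $x_i(n+k)^{1/2^k}\geq x_j(n)$ for all $j$ it gets $d\,x_i(n+k)^{1/2^k}\geq p(n)$, and then sandwiches $\limsup_n \log x_i(n)/2^{n+1}$ and $\liminf_n \log x_i(n)/2^{n+1}$ between $h$ and $h$, leaning on Theorem~\ref{prop:limit} for the existence of $\lim_n \log p(n)/2^{n+1}$. You instead extract a two-sided trapping bound purely from the recursion, with the accumulated error $\eta_n=\log d\cdot 2^{-n}$ uniform in $k$, and run a Cauchy argument; this makes the primitive case self-contained, and as a by-product it re-proves (for these SFT tree shifts) that $\log p(n)/2^{n+1}$ converges without invoking the subadditivity theorem, since $\log p(n)/2^{n+1}$ differs from $\max_i a_i(n)$ by $O(2^{-n})$. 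One small point both treatments share and neither makes fully explicit: identifying $p(n)$ with $\sum_i x_i(n)$ requires every $M$-admissible labeling of $\Delta_n$ to extend to a labeled tree in $X_M$, which holds because irreducibility forces every row of $M$ to be nonzero; your argument also needs each row nonzero so that the inner sum in the recursion is nonempty, so you should state that once.
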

\begin{proof}
	By irreducibility, given $i,j \in \{1,\dots,d\}$, there is a word in the subshift $\Sigma_M$ of some length $k(i,j)$ that begins with $a_i$ and ends with $a_j$.
	By labeling each path from the root to a vertex at level $k(i,j)$ with this same word, we can find a labeling consistent with $M$ that has $a_i$ at the root and $a_j$ at every vertex at level $k(i,j)$. 
	Now for any $n \geq 0$ the shifts to these vertices at level $k(i,j)$ of $\Delta_n$ can be labeled independently in $x_j(n)$ ways. 
	This implies that 
	\be
	x_i(n+k(i,j)) \geq x_j(n)^{2^{k(i,j)}},
	\en
	and Equation (\ref{eq:limsups}) follows.
	
	Suppose now that $M$ is primitive. Then it is possible to find a single $k$ such that for all $i,j$ there is a word in $\Sigma_M$ of length $k$ that begins with $a_i$ and ends with $a_j$. Then for all $i,j$,
	\be
	x_i(n+k)^{1/2^k} \geq x_j(n),
	\en
	so that for all $i$
	\be
	d x_i(n+k)^{1/2^k} \geq p_\tau (n),
	\en
	and
	\be
	h=\lim_{n \to \infty} \frac{\log p_\tau(n)}{2^{n+1}} \geq \limsup_{n \to \infty} \frac{\log x_i(n)}{2^{n+1}} 
		\geq \liminf_{n \to \infty} \frac{\log x_i(n+k)}{2^{n+k+1}} \geq \liminf_{n \to \infty} \frac{\log p_\tau(n)}{2^{n+1}} = h.
		\en
	\end{proof}
	
\begin{example}
	Consider the matrix $M$ whose rows are $0100, 0010, 0101, 1000$. 
	We will show that for the corresponding tree shift we have 
	\be
	h_i = \liminf_{n \to \infty} \frac{\log x_i(n)}{2^{n+1}} \leq \frac{h}{2} < h \quad\text{ for all } i,
	\en
	so that the limits $\log x_i(n)/2^{n+1}$ do not exist.
	
	The recurrence equations for the $x_i(n)$ are
	\be
	\begin{aligned}
		x_1(n+1)&=x_2(n)^2\\
		x_2(n+1)&=x_3(n)^2\\
		x_3(n+1)&=(x_2(n)+x_4(n))^2\\
		x_4(n+1)&=x_1(n)^2,
		\end{aligned}
	\en
	with $x(0)=(1,1,1,1)$.
	One may compute by hand that $x(1)=(1,1,4,1), x(2)=(1,16,4,1),x(3)=(256,16,289,1)$.
	Using induction, for all $n \geq 1$,
	\be
	\begin{aligned}
	(x_1(2n),x_3(2n))&=(x_1(2n-1),x_3(2n-1)),\\
	(x_2(2n-1),x_4(2n-1))&=(x_2(2n-2),x_4(2n-2)).
	\end{aligned}
	\en
	Thus
	\be
	\limsup_{n \to \infty} \frac{\log x_i(n)}{2^{n+1}} =h \quad\text{ for all } i
	\en
 must occur along even $n$ for $x_2$ and $x_4$ and along odd $n$ for $x_1$ and $x_3$.
 	Then we have, for example,
\be
\limsup _{n \to \infty} \frac{\log x_1(2n)}{2^{2n+1}} = \limsup_{n \to \infty} \frac{\log x_1(2n-1)}{2^{2n+1}}= \frac{1}{2} \limsup_{n \to \infty} \frac{\log x_1(2n-1)}{2^{(2n-1)+1}}=\frac{h}{2}.
\en
\end{example}

The following comparison of the entropy of a one-dimensional shift of finite type with the tree shift that it determines is intuitively plausible, but not trivial to prove. A possible extension is mentioned below in Question 8. Note for comparison that numerical evidence \cite{GamarnikKatz2009} indicates that for the golden mean SFT's on the integer lattices $\mathbb Z^d$ (also called the hard square or hard core models) the topological entropy seems to {\em decrease} with $d=1,2,3,4$. 

	\begin{theorem} \label{thm:htop}
	Let $M$ be an irreducible $d$-dimensional $0,1$ matrix, $\Sigma_M$ the corresponding shift of finite type, and $X_M$ the corresponding tree shift, labeled by elements of the alphabet $A$, with $|A|=d$, subject to the adjacency restrictions given by $M$.
	Then the topological entropy of $\Sigma_M$ is less than or equal to the topological entropy of $X_M$: $\htop(\Sigma_M) \leq  h$.
\end{theorem}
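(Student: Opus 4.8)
The plan is to work directly with the counts $x_i(n)$ of admissible labelings of $\Delta_n$ carrying $a_i$ at the root, introduced in the previous Proposition, and to show that $\liminf_n 2^{-(n+1)}\log p(n)\geq \log\lambda$, where $\lambda$ is the Perron--Frobenius eigenvalue of $M$, so that $\htop(\Sigma_M)=\log\lambda$. As in the Proposition, splitting an admissible labeling of $\Delta_{n+1}$ according to the labels of the two children of the root yields the recurrence
\be
x_i(n+1)=\Bigl(\sum_j M_{ij}\,x_j(n)\Bigr)^2,\qquad x_i(0)=1,
\en
and $p(n)=\sum_i x_i(n)$ because $M$ is irreducible (so every admissible $\Delta_n$-labeling extends to a full tree in $X_M$). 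The obstacle is that the naive estimates --- bounding the inner sum below by its largest or smallest term, or embedding level-constant $\Sigma_M$-words into the tree --- throw away the arithmetic content of $M$ and only produce $h\geq 0$; the factor $\lambda$ must be coaxed out of the full branching sum.

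The device I would use is the Parry (maximal-entropy) measure of $\Sigma_M$. Let $w>0$ be the right eigenvector, $Mw=\lambda w$, let $u>0$ be the left eigenvector, and set the transition probabilities $q_{ij}=M_{ij}w_j/(\lambda w_i)$ with stationary distribution $\pi_i=u_iw_i/\sum_k u_kw_k$, so that classically $\sum_i\pi_iq_{ij}=\pi_j$ and the associated Markov entropy $-\sum_{i,j}\pi_iq_{ij}\log q_{ij}$ equals $\log\lambda$. I would then weight the logarithm of the recurrence by $\pi$, defining $L(n)=\sum_i\pi_i\log x_i(n)$, and apply concavity of $\log$ (the log-sum inequality) in the form
\be
\log\Bigl(\sum_j M_{ij}x_j(n)\Bigr)=\log\Bigl(\sum_j q_{ij}\frac{x_j(n)}{q_{ij}}\Bigr)\geq \sum_j q_{ij}\log x_j(n)-\sum_j q_{ij}\log q_{ij}.
\en

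Multiplying by $2\pi_i$ and summing over $i$, stationarity $\sum_i\pi_iq_{ij}=\pi_j$ telescopes the first term on the right to $2L(n)$, while the second contributes exactly $2\log\lambda$; this gives the linear recursive inequality $L(n+1)\geq 2L(n)+2\log\lambda$. Since $L(0)=0$, tracking the quantity $L(n)+2\log\lambda$ (which at least doubles at each step) yields $L(n)\geq (2^{n+1}-2)\log\lambda$. Finally, because $\pi$ is a probability vector, $\log p(n)=\log\sum_i x_i(n)\geq \sum_i\pi_i\log x_i(n)=L(n)$, so that
\be
\frac{\log p(n)}{2^{n+1}}\geq \bigl(1-2^{-n}\bigr)\log\lambda,
\en
and letting $n\to\infty$, together with the existence of the limit from Theorem \ref{prop:limit}, gives $h\geq\log\lambda=\htop(\Sigma_M)$. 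I expect the one genuinely delicate point to be arranging that the entropy term collapses to the constant $\log\lambda$: this is exactly where the specific Parry weights $q_{ij}$ and $\pi_i$ are indispensable, since an arbitrary weighting would neither telescope under the recurrence nor reproduce the topological entropy of $\Sigma_M$.
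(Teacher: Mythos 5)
Your proof is correct, and although it starts from the same recurrence $x_i(n+1)=(Mx(n))_i^2$ as the paper and likewise proceeds by induction using a Perron--Frobenius-weighted average plus a convexity inequality, the key lemma is genuinely different. The paper takes the \emph{left} eigenvector $v$ (with $vM=\lambda v$, normalized so $\sum_i v_i=1$) and tracks the arithmetic mean $x(n)\cdot v$: applying $\mathbb{E}(X^2)\geq[\mathbb{E}(X)]^2$ to $X_i=(Mx(n))_i$ with probabilities $v_i$ gives $x(n+1)\cdot v\geq[\lambda\, x(n)\cdot v]^2$, and induction yields $x(n)\cdot v\geq\lambda^{2^{n+1}-2}$, the factor $\lambda$ coming directly from the eigenvector relation. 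You instead track the geometric mean $L(n)=\sum_i\pi_i\log x_i(n)$ under the Parry stationary distribution, apply concavity of $\log$ against the Parry transition probabilities $q_{ij}$, and get $L(n+1)\geq 2L(n)+2\log\lambda$, where $\log\lambda$ enters as the entropy of the Parry chain and stationarity does the telescoping; both inductions land on the same quantitative bound $\lambda^{2^{n+1}-2}$ (for an arithmetic mean in the paper, a geometric mean for you) and finish by comparing with $p(n)$. The paper's route is more elementary and self-contained, needing only the left eigenvector and the square-mean inequality; yours invokes more machinery but makes the appearance of $\htop(\Sigma_M)$ conceptually transparent (it is literally the entropy of the measure of maximal entropy being injected along each branch), and it generalizes: the identical computation with any stationary Markov measure $(\pi,Q)$ compatible with $M$ gives $h\geq -\sum_{i,j}\pi_i Q_{ij}\log Q_{ij}$, so the theorem also follows from the variational principle, with the Parry measure simply realizing the supremum. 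Two small points of hygiene: in the Jensen step the sums must be restricted to those $j$ with $M_{ij}=1$ (equivalently $q_{ij}>0$), and the inequality $\log p(n)\geq L(n)$ uses that $\pi$ is a probability vector together with $x_j(n)\geq 1$ (or Jensen once more); both are fine as you set things up.
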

\begin{proof}  
	Let $v$ denote the positive Perron-Frobenius left eigenvector of $M$ normalized so that $\sum v_i=1$, and let $\lambda>0$ denote the maximum eigenvalue of $M$. 
	For each $n=0,1,\dots$ and $i=1,\dots,|A|$ denote by $x(n)=(x_i{(n)}), i=1,\dots ,|A|$, the vector that gives for each symbol $i \in A$ the number of trees of height $n$ labeled according to the transitions allowed by $M$ that have the symbol $i$ at the root. 
	Considering the symbols that can follow each symbol $i$ in the last row of a labeling of $\Delta_n$, and that they can be assigned in independent pairs to the nodes below, shows that 
	these vectors satisfy the recurrence 
	\be
	x_i(0)=1, \quad x_i(n+1)=(Mx(n))_i^2 \text{ for all } i=1,\dots,d, \text{ all } n \geq 0.
	\en
	(Cf. the nonlinear recurrences in \cite{BC3}.)
	Denote by $1$ the vector $(1,1,\dots,1) \in \mathbb R^d$. 
	We claim that 
	\be
	x(n) \cdot v \geq \lambda^{2^{n+1}-2}v \cdot 1 \quad\text{ for all } n\geq 0.
	\en
	Since all entries of $v$ are positive, $x(n) \cdot v$ and $x(n) \cdot 1$ grow at the same superexponential rate, so
	the result follows.
	
	For $n=0$ we have
	\be
	x(0) \cdot v =\sum_i v_i =v \cdot 1.
	\en
	
	Assuming that the inequality holds at stage $n$ and using the inequality $\mathbb E(X^2) \geq [\mathbb E(X)]^2$ on the  random variable $X_i = [Mx(n)]_i$ with discrete probabilities $v_i$, 
	we have
	\be
	\begin{aligned}
		&\sum_ix_i(n+1) v_i=\sum_i(Mx(n))_i^2 v_i \geq \left[\sum_i Mx(n)_i v_i \right]^2\\
		&=\left[\sum_i x(n)_i (vM)_i \right]^2 = \left[\sum_i x(n)_i  \lambda v_i \right]^2 = \left[\lambda x(n) \cdot v \right]^2\\
		&\geq \left[\lambda^{2^{n+1}-2} \lambda v \cdot 1\right]^2
		=  \lambda^{2^{n+2}-2} v \cdot 1.
	\end{aligned}
	\en
\end{proof}

\begin{proposition}\label{prop:upper}
		Let $M$ be an irreducible $d$-dimensional $0,1$ matrix, $\Sigma_M$ the corresponding shift of finite type, and $X_M$ the corresponding tree shift, labeled by elements of the alphabet $A$, with $|A|=d$, subject to the adjacency restrictions given by $M$.
		Let $r$ be a positive right eigenvector of $M$, $\lambda$ the maximum eigenvalue of $M$, $r_{\max}=\max\{r_i\}, r_{\min}=\min\{ r_i\}$, and $c=r_{\max}/r_{\min}$.
		Then the entropy $h$ of the tree shift $X_M$ satisfies the estimate
		\be
		h \leq U = \frac{1}{2} \log c + \log \lambda.
	\en
	\end{proposition}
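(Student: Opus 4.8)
The plan is to control the growth of the counting vector $x(n)$ from above by pairing it with the right eigenvector $r$, dually to the lower-bound argument of Theorem \ref{thm:htop}, which paired $x(n)$ with the left eigenvector $v$. The recurrence $x_i(n+1)=(Mx(n))_i^2$ with $x_i(0)=1$ is already available from the proof of Theorem \ref{thm:htop}, and positivity of $r$ (guaranteed by irreducibility through Perron--Frobenius) is what makes the following device well behaved. First I would introduce the $r$-weighted maximum
\be
m(n) = \max_{1 \le i \le d} \frac{x_i(n)}{r_i},
\en
which at $n=0$ equals $1/r_{\min}$ since $x_i(0)=1$.

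The key step, and the heart of the matter, is to show that $m$ satisfies a clean quadratic recurrence. From the defining inequality $x_j(n) \le m(n)\, r_j$ and the eigenvector identity $(Mr)_i = \lambda r_i$ I would estimate
\be
(Mx(n))_i = \sum_j M_{ij} x_j(n) \le m(n) \sum_j M_{ij} r_j = \lambda\, m(n)\, r_i,
\en
and then square to get $x_i(n+1) \le \lambda^2 m(n)^2 r_i^2$. Dividing by $r_i$ and bounding $r_i \le r_{\max}$ yields
\be
m(n+1) \le \lambda^2 r_{\max}\, m(n)^2 .
\en

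Finally I would linearize by setting $a_n = \log m(n)$, which turns the above into $a_{n+1} \le 2a_n + 2\log\lambda + \log r_{\max}$. Writing $b = 2\log\lambda + \log r_{\max}$, this reads $a_{n+1}+b \le 2(a_n+b)$, so $a_n + b \le 2^n(a_0 + b)$, where $a_0 = -\log r_{\min}$ gives $a_0 + b = \log c + 2\log\lambda$. Since every allowed labeling of $\Delta_n$ is counted by some $x_i(n) \le r_{\max}\,m(n)$, we have $p(n) \le d\, r_{\max}\, m(n)$, and hence
\be
\frac{\log p(n)}{2^{n+1}} \le \frac{\log(d\,r_{\max}) - b}{2^{n+1}} + \frac{\log c + 2\log\lambda}{2}.
\en
Letting $n \to \infty$, the first term vanishes and I obtain $h \le \tfrac{1}{2}\log c + \log\lambda = U$. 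The only point requiring a little care is the passage to the limit: the comparison $p(n) \le d\,r_{\max}\,m(n)$ controls only a $\limsup$ a priori, but Theorem \ref{prop:limit} guarantees that $h$ is a genuine limit, so the bound on any subsequence bounds $h$ itself. The quadratic recurrence for the weighted maximum is the substantive ingredient; everything after it is a routine linear estimate.
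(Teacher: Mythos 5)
Your proof is correct, but it takes a genuinely different route from the paper's. The paper's argument is combinatorial: it invokes the one-dimensional estimate $N_n(a) \leq c\lambda^n$ (from Lind--Marcus) for the number of $n$-blocks that can follow a symbol $a$ in $\Sigma_M$, and then labels $\Delta_n$ path by path --- first the left edge, then each successive path given the nodes already labeled --- obtaining by induction the bound $d\, c^{2^n}\lambda^{2^{n+1}-2}$ on the number of allowed labelings of $\Delta_n$, from which $h \leq \tfrac{1}{2}\log c + \log\lambda$ follows. You instead take the root-indexed counting vector and its recurrence $x_i(n+1) = (Mx(n))_i^2$ from the proof of Theorem \ref{thm:htop} and control it from above by the weighted maximum $m(n) = \max_i x_i(n)/r_i$, using the right Perron eigenvector exactly dually to the way the left eigenvector and Jensen's inequality give the lower bound there. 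Each step checks out: $x_j(n) \leq m(n) r_j$ and $Mr = \lambda r$ give $m(n+1) \leq \lambda^2 r_{\max}\, m(n)^2$, the log-linearization is exact, and $p(n) \leq \sum_i x_i(n) \leq d\, r_{\max}\, m(n)$ is the right comparison, since every block of $X_M$ is an $M$-consistent labeling of $\Delta_n$. It is worth noting that unwinding your estimate gives $\log p(n) \leq \log d + 2^n \log c + (2^{n+1}-2)\log\lambda$, i.e., $p(n) \leq d\, c^{2^n}\lambda^{2^{n+1}-2}$, precisely the paper's bound, so the two arguments are quantitatively identical; conceptually, yours makes the left/right eigenvector duality between the upper and lower entropy bounds explicit and avoids citing the one-dimensional block-counting estimate (whose standard proof is in fact the same right-eigenvector comparison applied along a single path), while the paper's version is a more hands-on count that needs no recurrence. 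One small simplification: your closing concern about $\limsup$ versus limit is unnecessary --- your bound on $\log p(n)/2^{n+1}$ holds for every $n$, so it bounds the $\limsup$, and hence $h$, without any appeal to Theorem \ref{prop:limit}.
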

\begin{proof}
	We use the following estimate for the number $N_n(a)$  of $n$-blocks that can follow any symbol $a \in A$ in the SFT $\Sigma_M$ (see \cite[p. 107]{LM}):
	\be
	N_n(a) \leq c \lambda^n.
	\en
	To label the nodes of $\Delta_n$ with symbols from the alphabet $A$ consistent with the adjacency restrictions prescribed by $M$, 
	we first label the nodes down the left edge $LL \dots L$; there are no more than $d c \lambda^n$ ways to do this. 
	Then we label the paths below, taking into account nodes above that have already been labeled. 
		Thus there are less than or equal to $d c \lambda^2 c \lambda = d c^2 \lambda^3$ ways to label $\Delta_1$, 
	no more than $d c \lambda^3 c \lambda c \lambda^2 c \lambda = d c^4 \lambda^7$ to label $\Delta_2$, etc. 
	Using induction, we estimate that there are no more than 
	\be
	d c^{2^n} \lambda^{2^{n+1}-2}
	\en
	ways to label $\Delta_n$ that observe the adjacency restrictions,
	and this gives
	\be
	h \leq U =\frac{1}{2} \log c + \log \lambda.
	\en	
	\end{proof}

\section{Recurrence and numerics for the golden mean}
We compute the entropies of the one-dimensional SFT and of the tree shift numerically for the golden mean restrictions, for which no two adjacent nodes are allowed to have the same label $1$, by means of a recurrence equation in just one variable. 
This approach shows explicitly how the characteristic polynomial of the adjacency matrix of the one-dimensional SFT enters and may be helpful in the analysis of other examples.

Direct observation gives $p(0)=2$ and $p(1)=5$. With some more effort we find that $p(2)=41$ and $p(3)=2306$. Consulting the Online Encyclopedia of Integer Sequences \cite{OEIS} finds Sequence A076725, the number of independent sets (no two nodes in the set are adjacent) in a dyadic tree with $2^n - 1$ nodes.

Referencing Jonathan S. Braunhut, OEIS states that the sequence satisfies the recursion 
\be
p(n+1)=p(n)^2 + p(n-1)^4.
\en
This may be verified as follows. 
A labeled shift of $\Delta_{n+1}$ with no $11$ either begins with $0$ at its root, in which case it can be followed by two independently labeled shifts of $\Delta_n$'s with no $11$ below, 
or else it begins with $1_0^0$ at the root followed  by four independently labeled shifts of $\Delta_{n-1}$'s below. 

OEIS gives numerical evidence that
\be
p(n) \asymp bc^{2^{n+2}}, \qquad\text{with } c \approx 1.28975 \quad\text {and } b \approx 0.6823278.
\en
Then we find that 
\be
h=2\log c \approx 0.509 \qquad\text{ and } \qquad h_2 = \log 2 \approx 0.693.
\en

We verify that the limit for $h$ exists, by using the recurrence equation $p(n+1)=p(n)^2 + p(n-1)^4.$

\begin{proposition}
	Suppose that $p(0)=2, p(1)=5$, and for $n \geq 2$ we have $p(n+1)=p(n)^2 + p(n-1)^4. $
	Let  $q(n)=p(n)/[p(n-1)]^2$ for $n \geq 2$. 
	Then $\lim_{n \to \infty} q(n)$ exists (it is the real root of $x = 1+1/x^2$, which is approximately 1.46557), 
	and therefore $\lim \log p(n)/2^{n+1}$ exists (it is approximately $0.509$).
\end{proposition}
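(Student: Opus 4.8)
The plan is to reduce the second‑order recurrence for $p(n)$ to a first‑order recurrence for the ratio $q(n)=p(n)/p(n-1)^2$, analyze that scalar recurrence as the iteration of a fixed map, and then recover the behavior of $\log p(n)/2^{n+1}$ by summation. Dividing $p(n+1)=p(n)^2+p(n-1)^4$ by $p(n)^2$ gives
\[
q(n+1)=\frac{p(n+1)}{p(n)^2}=1+\Big(\frac{p(n-1)^2}{p(n)}\Big)^2=1+\frac{1}{q(n)^2},
\]
so $q(n+1)=f(q(n))$ for $n\geq 2$, where $f(x)=1+x^{-2}$. A fixed point of $f$ satisfies $x=1+1/x^2$, equivalently $x^3=x^2+1$, which has a single real root $x^*\approx 1.46557$ (the cubic $x^3-x^2-1$ is negative at its only positive critical point and tends to $+\infty$).

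To prove $q(n)\to x^*$, I would exhibit a closed interval on which $f$ is a contraction and which the orbit never leaves. From $p(2)=p(1)^2+p(0)^4=41$ and $p(3)=p(2)^2+p(1)^4=2306$ we get $q(2)=41/25$ and $q(3)=2306/1681$, and I would take $I=[q(3),q(2)]$. Since $f$ is strictly decreasing on $(0,\infty)$, it maps $I$ onto $[f(q(2)),f(q(3))]=[q(3),q(4)]$, and the direct check $q(4)=f(q(3))\leq q(2)$ shows $f(I)\subseteq I$; hence $q(n)\in I$ for all $n\geq 2$. On $I$ one has $|f'(x)|=2/x^3\leq 2/q(3)^3<1$, so $f$ is a contraction of $I$ into itself. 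By the contraction mapping principle $f$ has a unique fixed point in $I$, necessarily $x^*$, and $q(n)\to x^*$ (in fact geometrically).

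Finally I would pass from $q$ back to $p$. Writing $a_n=\log p(n)/2^{n+1}$ and taking logarithms in $p(n)=q(n)p(n-1)^2$ yields $a_n-a_{n-1}=\log q(n)/2^{n+1}$, whence $a_n=a_1+\sum_{k=2}^{n}\log q(k)/2^{k+1}$. Because the factors $q(k)$ lie in the bounded interval $I$, the terms $\log q(k)$ are bounded and the series converges absolutely; therefore $\lim_n a_n$ exists (numerically $\approx 0.509$, consistent with $p(n)\asymp b\,c^{2^{n+2}}$).

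The main obstacle is the convergence of $q(n)$: since $f$ is decreasing, the orbit oscillates around $x^*$ and no direct monotonicity argument applies, so everything hinges on producing an interval on which $f$ genuinely contracts. The point to watch is that this fails on the first intervals one might try---on $[1,2]$ or $[5/4,41/25]$ the quantity $2/x^3$ exceeds $1$ near the left endpoint---whereas the tighter choice $I=[q(3),q(2)]$, forced by the orbit itself, keeps $|f'|$ bounded by about $0.775$ and makes the contraction argument go through.
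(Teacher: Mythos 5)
Your proof is correct, and while it follows the same overall skeleton as the paper's---pass to the ratio recurrence $q(n+1)=1+1/q(n)^2$, prove the ratios converge, then telescope $\log p(n)/2^{n+1}$---the middle step is executed by a genuinely different (and tighter) argument. The paper proves convergence of $q_n$ by hand: from the identity $|q_n-q_{n-1}|=|q_{n-2}-q_{n-1}|\,\frac{q_{n-2}+q_{n-1}}{q_{n-1}^2q_{n-2}^2}$ it deduces that consecutive differences alternate in sign and shrink by a factor at most $0.891657<1$, so the sequence is Cauchy; it then identifies the limiting value $1.46557\ldots$ only by appeal to ``numerical evidence from OEIS.'' You instead apply the contraction mapping principle to $f(x)=1+x^{-2}$ on the explicit invariant interval $I=[q(3),q(2)]$, where $|f'(x)|=2/x^3\le 2/q(3)^3\approx 0.775<1$, which both proves convergence and \emph{rigorously} identifies the limit as the unique fixed point in $I$, i.e.\ the unique real root of $x^3=x^2+1$---a cleaner closing of the gap the paper leaves to numerics. (Your observation that the naive intervals such as $[5/4,41/25]$ fail because $2/x^3>1$ near the left endpoint is exactly the subtlety that presumably pushed the paper toward its difference-identity workaround.) Your final step is also slightly more economical: writing $a_n=a_1+\sum_{k=2}^n \log q(k)/2^{k+1}$, you need only \emph{boundedness} of $q(k)$ (immediate from invariance of $I$) for absolute convergence, whereas the paper first extracts the limit $a=\lim\log q_n$ and runs a Cauchy argument on the perturbed sequence $\log p_n/2^{n+1}+a/2^{n+1}$; so in your version the two conclusions of the proposition decouple. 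One cosmetic point in your favor: your indexing $q(2)=41/25$, $q(3)=2306/1681$ matches the statement's definition $q(n)=p(n)/p(n-1)^2$, while the paper's proof shifts the index (its ``$q(4)=625/1681$'' is a typo for $2306/1681$).
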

\begin{proof}
	We have $q(n)=1 + 1/q(n-1)^2, q(2)=5/4=1.25, q(3)=41/25 \approx 1.64,q(4)=625/1681 \approx 1.3718$.
	Using the recurrence,
	\be
	q_n-q_{n-1}=\frac{q_{n-2}^2 - q_{n-1}^2}{q_{n-1}^2q_{n-2}^2},
	\en
	so that $q_n - q_{n-1}$ is alternating in sign. Moreover,
	\be
	|q_n - q_{n-1}| = |q_{n-2} - q_{n - 1}| \frac{q_{n-2} + q_{n-1}}{q_{n-1}^2 q_{n-2}^2},
	\en
	and the factor ${q_{n-2} + q_{n-1}}/{q_{n-1}^2 q_{n-2}^2}$ is bounded by $0.891657 < 1$, since, viewing the first few values of the sequence $q(n)$ and using the fact that it is alternately increasing and decreasing  
	we have both of $q_{n-2}, q_{n-1} \geq 1.25$ and at least one of them no less than $1.3718$. 
	Numerical evidence from OEIS shows that 
	\be
	q_n = \frac{p_n}{p_{n-1}^2} \to 1.46557\dots .
	\en
	
	We show now that this implies the existence of the limit $\lim \log p_n/2^{n+1}$.
	From above, since $q_n \to 1.46557\dots$, we have $\log p_n -2 \log p_{n-1} \to \log 1.46557\dots = a=0.382244\dots$.
	Thus
	\be
	\left|	\log p_n - 2 \log p_{n-1} - a\right| = \epsilon_n \to 0,
	\en
	and hence
	\be
		\left|\left(\frac{\log p_n}{2^{n+1}}+\frac{a}{2^{n+1}}\right) - \left(\frac{\log p_{n-1}}{2^n} +\frac{a}{2^{n}}\right)\right| =
	\left|\frac{\log p_n}{2^{n+1}} - \frac{\log p_{n-1}}{2^n} -\frac{a}{2^{n+1}}\right| 
	< \frac{\epsilon_n}{2^{n+1}}.
	\en
	Therefore the sequence 
	\be
	\left(\frac{\log p_n}{2^{n+1}}+\frac{a}{2^{n+1}}\right)
	\en
	converges, and hence so does the sequence $\log p_n/2^{n+1}$.
\end{proof}

\begin{proposition}
	For each $n \geq 0$ denote by $A_n$ the number of dyadic trees labeled by $A=\{0,1\}$ in such a way that $0$ labels the root and no two adjacent nodes both have the label $1$. (Thus  $A_{n+1}=(A_n+A_{n-1}^2)^2$ for $n \geq 1$ and $(A_n)=(1,4,25, 1681, 5317636,\dots)$.) 
	Denote by $\gamma$ the golden mean $(1+\sqrt{5})/2$, so that $\gamma+1=\gamma^2$ and the one-dimensional golden mean shift of finite type $\Sigma_M$  has entropy $\log \gamma$.
	Then 
	\be
	A_n \geq \gamma^{2^{n+1}-1} \quad\text{ for } n\geq 4,
	\en
	so that 
	\be
	h = \lim_{n \to \infty} \frac{\log A_n}{2^{n+1}-1} \geq \log \gamma = \htop(\Sigma_M).
	\en 
\end{proposition}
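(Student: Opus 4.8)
The plan is to prove the pointwise lower bound $A_n \ge \gamma^{2^{n+1}-1}$ for $n \ge 4$ by a two-step induction built directly on the quadratic recurrence $A_{n+1}=(A_n+A_{n-1}^2)^2$, and then to pass to the limit. First I would record why that recurrence holds: in a labeling of $\Delta_{n+1}$ with $0$ at the root, each of the two child subtrees is a copy of $\Delta_n$ whose root may carry either symbol, and a child rooted at $1$ forces both of \emph{its} children to carry $0$. Thus the number of admissible labelings of one child subtree is $A_n$ (child rooted at $0$) plus $A_{n-1}^2$ (child rooted at $1$), and squaring for the two independent children gives $A_{n+1}=(A_n+A_{n-1}^2)^2$, consistent with $(A_n)=(1,4,25,1681,5317636,\dots)$.

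For the inductive step, assume $A_{n-1}\ge\gamma^{2^n-1}$ and $A_n\ge\gamma^{2^{n+1}-1}$. Then $A_{n-1}^2\ge\gamma^{2^{n+1}-2}$, and the defining identity $\gamma^2=\gamma+1$ is exactly what is needed to fuse the two terms:
\[
A_n+A_{n-1}^2\ \ge\ \gamma^{2^{n+1}-1}+\gamma^{2^{n+1}-2}
=\gamma^{2^{n+1}-2}(\gamma+1)=\gamma^{2^{n+1}-2}\gamma^2=\gamma^{2^{n+1}}.
\]
Squaring yields $A_{n+1}\ge\gamma^{2^{n+2}}\ge\gamma^{2^{n+2}-1}$, which is the desired bound at level $n+1$ (in fact slightly stronger), so the pair of hypotheses propagates. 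Because the recurrence is two-step, the induction needs two consecutive seed values; I would check numerically that $A_4=5317636\ge\gamma^{31}$ and $A_5=8143397^2\ge\gamma^{63}$, after which the bound holds for all $n\ge4$.

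The single point requiring care, and the only place the golden mean structure is essential, is that the naive estimate $A_{n+1}\ge A_n^2$ gives only $A_{n+1}\ge\gamma^{2^{n+2}-2}$, short of the target by a full factor of $\gamma$. That factor must be recovered from the $A_{n-1}^2$ term, and it is precisely $\gamma^2=\gamma+1$ that converts $\gamma^{2^{n+1}-1}+\gamma^{2^{n+1}-2}$ into $\gamma^{2^{n+1}}$; discarding this lower-order term leaves the induction unable to close. This also explains the restriction to large $n$: the bound fails for small indices (for instance $A_2=25<\gamma^7\approx 29$ and $A_1=4<\gamma^3\approx 4.24$), so one genuinely cannot seed the recursion below $n=3$, and the range $n\ge4$ is a safe choice.

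Finally, to reach the entropy statement I would divide through to get $\log A_n/(2^{n+1}-1)\ge\log\gamma$ for $n\ge4$. Since the golden mean adjacency matrix is primitive, the Proposition on primitive $M$ guarantees that $\lim_{n\to\infty}\log A_n/2^{n+1}$ exists and equals $h$; and as $2^{n+1}/(2^{n+1}-1)\to1$, the same limit results with denominator $2^{n+1}-1$. Letting $n\to\infty$ in the inequality then gives $h\ge\log\gamma=\htop(\Sigma_M)$, as claimed. I expect no real obstacle beyond the algebraic bookkeeping of the exponents and the verification of the two base cases; the conceptual content is entirely in the step where $\gamma^2=\gamma+1$ supplies the missing power of $\gamma$.
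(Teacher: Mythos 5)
Your proof is correct and follows essentially the same route as the paper: a two-step induction on the recurrence $A_{n+1}=(A_n+A_{n-1}^2)^2$ seeded at $n=4,5$, with the identity $\gamma+1=\gamma^2$ converting $\gamma^{2^{n+1}-1}+\gamma^{2^{n+1}-2}$ into $\gamma^{2^{n+1}}$ (you factor before squaring, the paper expands $(\gamma+1)^2=\gamma^4$ after squaring — the same algebra), followed by dividing by $2^{n+1}-1$ and letting $n\to\infty$. Your explicit appeals to the recurrence's derivation and to the earlier proposition on primitive matrices for the existence of the limit are details the paper leaves implicit, but they are not a different argument.
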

\begin{proof}
	Direct calculation verifies the inequality for $n=4,5$. Using induction,
	\be
	\begin{aligned}
		A_{n+1}&=(A_n+A_{n-1}^2)^2 \geq [\gamma^{2^{n+1}-1}+(\gamma^{2^n-1})^2]^2\\
		&=(\gamma^{2^{n+1}-1}+\gamma^{2^{n+1}-2})^2 = \gamma^{2^{n+2}-2}+2\gamma^{2^{n+2}-3}+\gamma^{2^{n+2}-4}\\
		&=\gamma^{2^{n+2}-4}(\gamma^2+2\gamma+1)=\gamma^{2^{n+2}-4}(\gamma+1)^2=\gamma^{2^{n+2}-4}\gamma^4\\
		&=\gamma^{2^{n+2}}\geq \gamma^{2^{n+2}-1}.
	\end{aligned}
	\en
\end{proof}

\section{Examples and questions}

The following table shows {\em Mathematica} computations for one $2 \times 2$ and fourteen $3 \times 3$ one-dimensional shifts of finite type, with entropies $\htop$, determining labelings of the dyadic tree. 
The entropy $h$ of the tree is estimated by recurrence up to $n=15$. 
If the transition matrix $M$ has all row sums equal to $s$, then $h=\htop(\Sigma_M)=U=\log s$.
For an upper estimate we use $U=(\log c)/2+\log\lambda$ from Proposition \ref{prop:upper}.

\medskip
\begin{table}[h]
\centering
\begin{tabular}{l l l l l l l l l l l}
	Name  &Matrix$=M$  &$\htop(\Sigma_M)$   &$h$ (est)                    &$U$   \\
	\hline
	$\Gamma$  &$11,10$              &$.481$   &$.509$                &.721  \\  
	$X_0$         &$010,101,101$    &$.481$   &$.509$                 &.722 \\
	$X_1$         &$110,001,110$     &$.481$   &$.509$                   &.722  \\
	$X_2$        &$011,101,100$    &$.481$     &$.509$                    &.722 \\
	$X_3$        &$011,111,101$     &$.81$     &$.846$                   &1.104  \\
	$X_4$         &$111,110,100$    &$.81$    &$.846$                  &1.214 \\
	$X_5$     &$110,011,101$      &$.693$  &$.693$                  &.693  \\
	$X_6$       &$011,101,110$    &$.693$   &$.693$                 &.693  \\
	$X_7$    &$110,001,111$       &$.693$    &$.768$                &1.04  \\
	$X_8$      &$110,011,110$     &$.693$     &$.693$              &.693  \\
	$X_9$      &$011,101,101$      &$.693$     &$.693$             &.693  \\
	$X_{10}$  &$011,111,100$    &$.693$     &$.774$             &1.242 \\
	$X_{11}$  &$111,100,100$     &$.693$    &$.763$              & 1.04  \\
	$A_1$       &$110,101,001$    &$.481$   &$.611$                       &$\infty$              \\
	$A_2$       &$110,011,010$    &$.481$   &$.575$                  &.962    \\
\end{tabular}
\medskip
\caption{Estimates of some tree shift entropies}
\end{table}
 \bigskip

\begin{example}
	For the matrix $M$ whose rows are $0 1 0, 0 0 1, 1 1 0$, we find numerically that the maximum eigenvalue is $\lambda \approx 0.2812$, the right and left eigenvectors are $r \approx (0.57, 0.75, 1)$ and $v \approx (0.75, 1.32, 1)$, so that the constant in Proposition \ref{prop:upper} is $c \approx 1.75$, and the (rigorous) upper estimate for $h$ is $U\approx 0.56$. 
	Thus for this example we have
	\be
	0< \htop(\Sigma_M) = \log \lambda \approx 0.28 \leq h \leq U  \approx 0.56 < \log 2.
	\en
(Numerically, using the recurrence, we find $h \approx 0.36$.)
\end{example}

\noindent
{\em Questions:} 

\noindent
1. What is the {\em maximal} possible complexity function of a dyadic tree all of whose infinite paths starting at the root are labeled by sequences from a fixed Sturmian system? (Does this concept even make sense?)

\noindent
2. Suppose that in a dyadic tree we label the infinite paths from the root with sequences from a fixed Sturmian system in the following lexicographic manner. 
Down the leftmost path put the lexicographically minimal sequence. 
Starting at the left edge, so long as there is no choice for the next symbol (the block completed is not right special), just copy the entry on the left edge at both nodes below. 
If the block arrived at allows two successor symbols, put a $0$ on the node below and to the left, a $1$ on the node below and to the right. 
Continue to apply this rule along all infinite paths from the root. 
We should arrive with the tree completely filled in with {\em all} sequences in the Sturmian system written along the uncountably many infinite paths from the root, in lexicographic order left to right, with the minimal sequence down the left edge and the maximal sequence down the right edge. 
The labeling is accomplished by following paths along the Hofbauer-Buzzi Markovian diagram of the Sturmian system (see \cite{CP}). 
What is the complexity function $p_\tau (n)$ of this labeled tree?

\noindent
3. What if we follow the preceding scheme to label the dyadic tree, except that whenever we have a choice we put $0$ and $1$ on the two nodes below randomly and independently of all other choices? 
What complexity function do we get with probability $1$? Could the labeled tree have positive entropy?

\noindent
4. Is there an example for which the limit defining $h$ (see Theorem \ref{prop:limit}) is not equal to the infimum?

\noindent
5. For each $n \geq 0$ denote by $\Phi_n$ the set of all allowable labelings of $\Delta_n$. 
For $n \geq 0, \phi \in \Phi_n$, and $ a \in A$, denote by $s_\phi (a)$ the number of the $2^n$ terminal nodes of $\Delta_n$ that have the label $a$.
For each $a \in A$ denote by $t_a$ the $a$'th row sum of $M$, that is, the number of outgoing arrows from the vertex $a$ in the graph of the shift of finite type defined by $M$.
Then 
\be
|\Phi_{n+1}| = \sum_{\phi \in \Phi_n} \prod_{a \in A} (t_a^2)^{s_\phi(a)}.
\en
Switching the order of summation might be a starting point for obtaining upper and lower estimates for $h$.

\noindent
6. Let us imagine that for most $\phi$ the distribution of the $s_\phi(a)$ is approximately given by the measure $\mu$ of maximal entropy for the subshift defined by $M$, so that $s_\phi(a) \sim 2^n \mu[a]$. 
Then
\be
|\Phi_{n+1}| \sim |\Phi_n| \prod_{a \in A} (t_a^2)^{2^n \mu [a]} \sim |\Phi_0| \prod_{a \in A} t_a^{(2^{n+1}-1)\mu[A]},
\en
so we might suppose that 
\be
\lim \frac{\log|\Phi_{n+1}|}{2^{n+2}} \sim U_m:= {\log\prod_{a \in A}t_a^{\mu[a]}}=\sum_{a \in A} \mu[a]\log t_a.
\en
The latter expression is the average of the row sums of the transition matrix $M$ using weights for the symbols given by the measure of maximal entropy for the one-dimensional shift of finite type. This may often be a good first approximation to $h$, usually from below.

\noindent
7. Measures of maximal entropy exist on tree shifts because the action is expansive, but determining uniqueness and identifying them, perhaps as Markov random fields (cf. \cites{Pavlov2012, MarcusPavlov2013}) appear to be significant problems. See \cite{MairesseMarcovici2017} for recent work on this problem.

\noindent
8. There is numerical evidence for the conjecture that the topological entropy $h^{(k)}$ of the tree shift on the regular $k$-ary tree defined by an irreducible $d \times d$ $0,1$ matrix $M$ with maximum row sum $s$ (1) is a strictly increasing function of $k$ (extending Theorem \ref{thm:htop}) and (2) has limit $\log s$. 

We now sketch a proof of (2). Let $a$ be a symbol in the alphabet which has $s$ successors allowed by the adjacency rules specified by $M$. 
We construct at least $s^{k^n}$ labelings of $\Delta_n$ as follows. First, assign to each of the $k^{n-1}$ sites at level $n-1$ the symbol $a$. 
Then assign to each site at level $n-2$ the same allowed predecessor of $a$, call it $b_{n-2}$. 
To each site at level $n-3$ assign the same allowed predecessor of $b_{n-2}$, call it $b_{n-3}$, etc. 
To each of the $k^n$ sites at level $n$ in $\Delta_n$ assign any of the $s$ allowed successors of $a$. 
In this way we form at least $s^{k^n}$ labelings of $\Delta_n$ which respect the adjacency rules and can be extended to valid labelings of the entire $k$-ary tree. Therefore 
\be
p(n) \geq s^{k^n} \quad \text{ and } \quad h^{(k)} =  \lim_{n \to \infty} \frac{\log p(n)}{(1+k+ \dots + k^n)} \geq \frac{k-1}{k} \log s.
\en
The reverse inequality follows from
\be
p(n) \leq d s^k s^{k^2} \dots s^{k^n} \quad \text{ for all } n.
\en

\begin{ack*}
	We thank Professors Jung-Chao Ban, Chih-Hung Chang, and Nic Ormes for discussions on this topic, Professor Olivier Carton and his colleagues for alerting us to the papers \cites{Carpi2001, Berstel2010}, and the referees for helpful comments..
\end{ack*}

\begin{bibdiv}
	\begin{biblist}
		\bibselect{TreeShifts}
	\end{biblist}
\end{bibdiv}

 \end{document}